\documentclass[a4paper,12pt]{amsart}
\usepackage{amsmath,mathrsfs,amssymb,xcolor,MnSymbol}
\usepackage[top=3cm,bottom=3cm,outer=3cm,inner=3cm,marginpar=3cm]{geometry}
\usepackage[utf8]{inputenc}

\input{choiyj.sty}

\newtheorem*{question*}{Question}
\newtheorem*{problem*}{Problem}
\newcommand{\iinner}[1]{\left\llangle{#1}\right\rrangle}

\newtheorem{example}[theorem]{Example}

\begin{document}
\title[Bottom of the spectrum of  complete noncompact K\"{a}hler  manifolds]{Bottom of the spectrum of  complete noncompact  K\"{a}hler manifolds}

\author{Ye-Won Luke Cho}
\address{Research Institute of Natural Science, Gyeongsang National University,
	Jinju,  Gyeongnam, 52828, Republic of Korea}
\email{ywlcho@gnu.ac.kr}

\author{Young-Jun Choi}
\address{Department of Mathematics and Institute of Mathematical Science, Pusan National University, 2, Busandaehak-ro 63beon-gil, Geumjeong-gu, Busan, 46241, Republic of Korea}
\email{youngjun.choi@pusan.ac.kr}

\subjclass[2010]{32M15, 32Q20, 53C55}
\keywords{bottom of spectrum, K\"ahler hyperbolic manifolds, bounded symmetric domains}

\begin{abstract}
We present a survey on the bottom of the spectrum
of the Hodge Laplacian on complete noncompact K\"ahler manifolds, with particular
emphasis on K\"ahler hyperbolic manifolds and bounded symmetric domains. We  also discuss theorems regarding the upper bounds for the bottom of the spectrum under Ricci and bisectional curvature assumptions, along with rigidity results for manifolds attaining the maximal bottom of the spectrum. Throughout the article, we propose several open problems. 
\end{abstract}

\maketitle

\section{Introduction}
Among various important isometric invariants in Riemannian geometry, the \textit{bottom of the spectrum} of the Hodge Laplacian plays a key role in the study of the global geometry of complete noncompact Riemannian manifolds. The notion is closely related to hyperbolic geometry; the existence of a negative upper bound for the Riemannian sectional curvature implies the positivity of the bottom \cite{McKean1970}, and conversely, complete manifolds with positive bottom of the spectrum enjoy certain global properties of hyperbolic manifolds \cite{Brooks81a, Brooks81b}.

In complex geometry, \textit{K\"ahler hyperbolic} manifolds \cite{Gromov1991} form a large class of noncompact complete K\"ahler manifolds with positive bottom. Important examples of such manifolds include  bounded homogeneous domains equipped with complete K\"ahler-Einstein metrics \cite{Kai_Ohsawa2007}. Despite its significance, there have been only a few studies regarding the bottom of the spectrum of bounded symmetric domains; see \cite{Long_Li2019, Long_Zhang_Lin_Shen2023}. Recently, the authors together with K.-H. Lee \cite{Cho_Choi_Lee2026}  obtained a uniform lower bound for the bottom of the spectrum of bounded symmetric domains in terms of some geometric invariants of the domain, using a refined version of Gromov's basic estimate on K\"ahler hyperbolic manifolds. 

On the other hand, there are various studies on the upper bounds for the bottom of the spectrum under negative lower bounds on different types of curvature, starting with the celebrated work of Cheng \cite{Cheng1975}. Several sharp estimates in the K\"ahler setting such as \cite{Li_Wang2005, Munteanu2009} have also been established. Interestingly, manifolds with the maximal bottom of the spectrum are often isometric to a 	warped product $\mathbb{R}\times N$ equipped with a warped product metric \cite{Li_Wang01, Li_Wang02, Li_Wang2005} and those theorems can be comparable to the splitting theorem of Cheeger-Gromoll \cite{Cheeger_Gromoll72}. It even turns out that a complete K\"ahler manifold with the maximal bottom of the spectrum is biholomorphic to a noncompact quotient of the complex hyperbolic space in certain cases as verified in \cite{Munteanu2009, Munteanu10}.

 In this article, we survey several studies regarding the bottom of the spectrum of complete noncompact K\"ahler manifolds and suggest several open problems.

\subsection*{Acknowledgements}
We would like to thank Kang-Hyurk Lee for introducing us to the topic and for many helpful discussions. The first named author was supported by the National Research Foundation of Korea (RS-2026-25498290) and the second named author was supported by the National Research Foundation of Korea (NRF-2023R1A2C1007227), Samsung Science and Technology Foundation (SSTF-BA2201-01).
\section*{Statements and Declarations}
\subsection*{Competing Interests} The authors state that there is no conflict of interests.
\subsection*{Data Availability Statement} This article has not used any associated data.
\section{The bottom of the spectrum}
Let $(X,g)$ be an $n$-dimensional complete Riemannian manifold. Denote by $\Delta$ the Hodge Laplacian which is the negative of the Laplace-Beltrami operator and by $D(\Delta):= \{ u \in L^2(X) : \Delta u \in L^2(X) \}$ the domain of $\Delta$.
\begin{definition}
The \textit{spectrum} $\sigma(\Delta)$ of the Hodge Laplacian $\Delta$ on $X$ is the set
\[
\sigma(\Delta):=\{\lambda \geq 0: \Delta-\lambda I: D(\Delta)\to L^2(X) ~\text{has no bounded inverse}\}.
\]
The nonnegative number
\[
\lambda_0(X,g):=\inf_{\lambda \in \sigma(\Delta)}{\lambda}
\]
is called the \textit{bottom of the spectrum} of $(X,g)$.
\end{definition}

We shall use the notation $\lambda_0(X):=\lambda_0(X,g)$ whenever the choice of $g$ is clear from the context. It is also well-known that the bottom can be realized as the following Rayleigh quotient:
\begin{align}\label{eqn: Rayleigh quotient}
\lambda_0(X)&=\inf \left\{ \frac{\int_X |\nabla f|^2 dV_g}{\int_X f^2 dV_g} : f \in D(\Delta), ~f \neq 0 \right\}.
\end{align}
Here, $\nabla f$ denotes the gradient of $f$ induced by the metric $g$. Since $(X,g)$ is complete, the space $\mathcal {D}(X)$ of smooth functions with compact support is dense in $D(\Delta)$ with respect to the graph norm
\begin{equation*}
	f\mapsto\norm f_g+\norm{df}_g:=\Big(\int_X f^2 dV_g\Big)^{\frac{1}{2}}+\Big(\int_X |\nabla f|^2 dV_g\Big)^{\frac{1}{2}};
\end{equation*}
see, for example, (3.2) Theorem in Ch.VIII  of \cite{Demailly-Book}. Hence we also have
\begin{align}\label{eqn: Rayleigh quotient smooth}
	\lambda_0(X)&=\inf \left\{ \frac{\int_X |\nabla f|^2 dV_g}{\int_X f^2 dV_g} : f \in \mathcal D(X), ~f \neq 0 \right\}.
\end{align}

Several remarks are in order.
\begin{enumerate}
  \setlength\itemsep{0.4em}
\item It is straightforward to check from (\ref{eqn: Rayleigh quotient}) that $\lambda_0$ is an isometry invariant. But $\lambda_0$ is not a local isometry invariant; if $p:X\to Y$ is a covering map, then $\lambda_0(X)\geq \lambda_0(Y)$. See Theorem A in \cite{Ballman_Polymerakis}.
\item If $(X,g)$ has finite volume, then $1_{X}\in D(\Delta)$ so that $\lambda_0(X)=0$ by (\ref{eqn: Rayleigh quotient}). So $\lambda_0(X,g)>0$ only if $(X,g)$ is a complete noncompact manifold with infinite volume. 
\end{enumerate}

One of the interesting features regarding the bottom of the spectrum is its relation with hyperbolic geometry. If $(X,g)$ is a complete manifold with Ricci curvature bounded from below by a positive constant, then it follows from the theorem of Bonnet-Myers that $X$ is compact. Therefore, $\lambda_0(X)=0$ by the previous remark. On the other hand, the bottom of the spectrum of a simply-connected, negatively curved manifold is always positive.

\begin{theorem}[\cite{McKean1970}]\label{thm;McKean}
If $(X,g)$ is a complete, simply-connected $n$-dimensional Riemannian manifold and the Riemannian sectional curvature is bounded from above by $-k<0$, then $\lambda_0(X,g)\geq \frac{(n-1)^2}{4}k$.
\end{theorem}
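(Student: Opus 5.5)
We prove McKean's inequality with a test function whose Laplacian is bounded below. That is, we find a function $r$ on $X$ with $|\nabla r|\le 1$ and $-\Delta r\ge (n-1)\sqrt{k}$ (in a suitable weak sense), and then integrate by parts against $f^2$ for compactly supported $f$. By the density statement (\ref{eqn: Rayleigh quotient smooth}) it is enough to bound the Rayleigh quotient from below on $f\in\mathcal D(X)$.

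\emph{Choice of $r$.} Fix a point $o\in X$ and let $r(x)=d(o,x)$. Since $X$ is complete and simply connected with sectional curvature $\le -k<0$, the Cartan--Hadamard theorem says $\exp_o$ is a diffeomorphism. Hence $r$ is smooth on $X\setminus\{o\}$, with $|\nabla r|=1$ there.

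\emph{Laplacian bound.} The Hessian comparison theorem, comparing with the space form of curvature $-k$, gives
\[
\nabla^2 r\ \ge\ \sqrt{k}\coth(\sqrt{k}\,r)\,(g-dr\otimes dr)
\]
on $X\setminus\{o\}$. Taking the trace, the Laplace--Beltrami operator satisfies
\[
-\Delta r=\operatorname{tr}\nabla^2 r\ \ge\ (n-1)\sqrt{k}\coth(\sqrt{k}\,r)\ \ge\ (n-1)\sqrt{k}.
\]
This step carries all the geometry; the rest is formal.

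\emph{Integration by parts.} For $f\in\mathcal D(X)$ we have
\[
(n-1)\sqrt{k}\int_X f^2\,dV\ \le\ \int_X f^2(-\Delta r)\,dV=-\int_X\langle\nabla(f^2),\nabla r\rangle\,dV\ \le\ 2\Big(\int_X f^2\Big)^{1/2}\Big(\int_X|\nabla f|^2\Big)^{1/2},
\]
using $|\nabla r|=1$ and Cauchy--Schwarz. Squaring yields
\[
\int_X|\nabla f|^2\ \ge\ \frac{(n-1)^2k}{4}\int_X f^2,
\]
and (\ref{eqn: Rayleigh quotient smooth}) then gives $\lambda_0\ge \frac{(n-1)^2}{4}k$.

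\emph{Main obstacle.} The integration by parts must be justified across the singular point $o$, where $r$ is only Lipschitz. There are two ways to handle it.
\begin{enumerate}
\item Excise a ball $B_\epsilon(o)$ and integrate by parts on $X\setminus B_\epsilon(o)$. This produces a boundary term $\int_{\partial B_\epsilon}f^2\,\partial_\nu r$, where $\nu$ is the outward normal of $X\setminus B_\epsilon(o)$, i.e.\ pointing toward $o$. Since $\partial_\nu r=-1$, this term is $-\int_{\partial B_\epsilon}f^2\le 0$. It therefore has a favorable sign, and in any case it is $O(\epsilon^{n-1})$, so it vanishes as $\epsilon\to0$. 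The interior integrals converge because $f^2$ is bounded and $|\Delta r|\sim (n-1)/r$ is integrable near $o$.
\item Alternatively, replace $r$ by the smooth function $\rho=\sqrt{1+r^2}$, or use a Busemann function, which is $C^2$ and has $-\Delta\ge (n-1)\sqrt{k}$ everywhere by the same comparison. With a Busemann function no point needs to be excised at all.
\end{enumerate}
I would use the excision argument and mention the Busemann alternative.
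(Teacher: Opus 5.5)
Your proposal is correct and is essentially the paper's proof: Cartan--Hadamard, then Hessian comparison giving $-\Delta r\ge (n-1)\sqrt{k}$, then integration against $f^2$ using $|\nabla r|=1$ and Cauchy--Schwarz; your excision argument, with its favorably signed boundary term $-\int_{\partial B_\epsilon(o)}f^2$, just makes explicit what the paper compresses into ``in the sense of distributions.'' Drop the side remark about $\sqrt{1+r^2}$, since it does not satisfy $-\Delta\ge (n-1)\sqrt{k}$ everywhere (at $o$ the value is exactly $n$, and already in the model space it falls below $(n-1)\sqrt{k}$ for large $r$), whereas the Busemann-function alternative is fine.
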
 
The equality holds for the real hyperbolic spaces; see \cite{Li_Wang2005} for more examples of manifolds attaining the equality.
\begin{proof}
Choose a point $p\in X$ and let $\rho(\cdot):=d(\cdot,p)$ be the distance function on $X$ induced by $g$. Then by the Cartan-Hadamard theorem, $\rho$ is smooth on $X\backslash \{p\}$. The Hessian comparison theorem in  \cite{Greene_Wu} implies
\[
\Delta \rho \leq -(n-1)\sqrt{k}\coth(\sqrt{k}\rho)\leq -(n-1)\sqrt{k}
\]
in the sense of distributions. Then for any $f\in \mathcal{D}(M)$, we get
\begin{align*}
(n-1)\sqrt{k}\int_{X}f^2dV_g&\leq -\int_{X}f^2\Delta \rho dV_g=2\int_{X}f\inner{\nabla f,\nabla \rho}dV_g\leq 2\int_{X}|f|\cdot |\nabla f|dV_g\\
&\leq 2\cdot \Big(\int_{X}f^2dV_g\Big)^{\frac{1}{2}} \cdot \Big(\int_{X}|\nabla f|^2dV_g\Big)^{\frac{1}{2}}
\end{align*}
since $|\nabla\,\rho|=1$. So we obtain
\[
\frac{(n-1)^2}{4}k\leq \frac{\int_{X}|\nabla f|^2dV_g}{\int_{X} f^2dV_g}.
\]
The inequality above together with (\ref{eqn: Rayleigh quotient smooth}) completes the proof.
\end{proof}

 See also \cite{Pinski78, Setti1991} for related results. Conversely, manifolds with positive bottom of the spectrum enjoy several global properties of hyperbolic manifolds; see, for example, \cite{Brooks81a, Brooks81b, Buser82}.

\section{K\"ahler hyperbolic manifolds}
Among various notions of hyperbolicity in complex geometry, the following seems to be well-adapted to the study of the bottom of the spectrum of K\"ahler manifolds. 
\begin{definition}\cite{Gromov1991} \label{def: Kahler hyp.}
A complete noncompact K\"ahler manifold $(X,\omega)$ is \textit{K\"ahler hyperbolic} if there exists a global 1-form $\eta$ on $X$ such that $\omega=d\eta$ on $X$ and
\[
\norm{\eta}_{L^{\infty}}:=\sup_{X} |\eta|_{\omega}<\infty.
\]
\end{definition}
Here, $|\eta|_{\omega}$ denotes the pointwise norm of $\eta$ with respect to the metric $\omega$. Originally, Gromov defined a compact K\"ahler manifold to be K\"ahler hyperbolic in \cite{Gromov1991} if its universal covering equipped with the pullback metric satisfies the condition in Definition \ref{def: Kahler hyp.}. Here, we choose the definition given as above, following \cite{Choi_Lee_Seo2023, Choi_Lee_Seo2026}. See also \cite{Bei_Claudon_Diverio_Trapani2024} for a birationally invariant version of K\"ahler hyperbolicity.

The basic example of a K\"ahler hyperbolic manifold is, of course, the Poincar\'e half-plane.
\begin{example}
Let $\mathbb{H}:=\{z=x+iy\in \mathbb{C}: x>0\}$ be the right-half plane equipped with the Poincar\'e metric 
\[
\omega_{\mathbb{H}}:=\frac{1}{K}\frac{dx\wedge dy}{x^2}=d\Big(-\frac{dy}{Kx}\Big)
\]
of constant Gaussian curvature $-K<0$. Then $\eta:=-\frac{dy}{Kx}$ satisfies
\[
|\eta|_{\omega_\mathbb{H}}^2=Kx^2\cdot \Big(-\frac{1}{Kx}\Big) \cdot \Big(-\frac{1}{Kx}\Big)=\frac{1}{K}=\norm{\eta}^2_{L^{\infty}}<+\infty. 
\]
Hence $(\mathbb{H},\omega_{\mathbb{H}})$ is K\"ahler hyperbolic. 
\end{example}

There are various examples of higher-dimensional K\"ahler hyperbolic manifolds that generalize the Poincar\'e half-plane:
\begin{enumerate} \setlength\itemsep{0.4em}
\item Simply-connected K\"ahler manifold with Riemannian sectional curvature bounded from above by a negative constant \cite{Gromov1991, Ballman06,Chen_Yang18}.
\item Bounded homogeneous domain equipped with the complete K\"ahler-Einstein metric \cite{Kai_Ohsawa2007, Choi_Lee_Seo2026}.
\item Hyperconvex domain in $\mathbb{C}^n$ equipped with the K\"ahler metric of the form $\omega=i\partial\bar{\partial}(-\log(-\beta))$, where $\beta$ is a nonpositive smooth plurisubharmonic exhaustion on the domain \cite{Donnelly1994}.
\item Strictly pseudoconvex domain equipped with the complete K\"ahler-Einstein metric \cite{Choi_Lee_Seo2023}.
\end{enumerate}
The Teichm\"uller space $\mathcal{T}_{g,n}$ of Riemann surfaces of genus $g$ with $n$ punctures is also K\"ahler hyperbolic by \cite{McMullen00}.

Denote by $\Delta:=dd^{\ast}+d^{\ast}d:\Omega^k(X)\to \Omega^k(X)$ the Hodge Laplacian acting on the space $\Omega^k(X)$ of differential $k$-forms on $X$. We also denote by $\mathcal{D}^k(X)$ the space of differential $k$-forms with compact support. The following basic estimate of Gromov is fundamental in the study of K\"ahler hyperbolic manifolds.
\begin{theorem}[\cite{Gromov1991}]\label{thm: Gromov vanishing}
If $(X,\omega)$ is a K\"ahler hyperbolic manifold of complex dimension $n$ and $\varphi\in \mathcal{D}^k(X)$ with $k\neq n$, then there exists a constant $c_{k}>0$ depending only on $k$ and $n$ such that
\[
\iinner{\Delta\varphi,\varphi}=\norm{d\varphi}^2+\norm{d^{\ast}\varphi}^2
\ge
\frac{c_k}{\norm{\eta}^2_{L^\infty}}
\norm{\varphi}^2.
\]
\end{theorem}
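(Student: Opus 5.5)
The plan is to run Gromov's standard argument, built on the Lefschetz operator $L=\omega\wedge\cdot$ and its adjoint $\Lambda$. Since $\omega=d\eta$, write $L\varphi=d(\eta\wedge\varphi)-\eta\wedge d\varphi$. By the hard Lefschetz isomorphism at the pointwise (linear algebra) level, for $k<n$ the map $L^{n-k}\colon\Lambda^k\to\Lambda^{2n-k}$ is an isometry up to a constant depending only on $n,k$. Hence $|L\varphi|\ge a_{k}|\varphi|$ pointwise for $k<n$, with $a_k>0$ depending only on $n,k$; indeed $L$ is injective on $k$-forms for $k<n$, and on $\Lambda^k\otimes\mathbb C$ there is a uniform lower bound obtained from $[L,\Lambda]=(k-n)$ applied on primitive decompositions. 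The case $k>n$ will then follow by the Hodge star, which commutes with $\Delta$, is an isometry, and maps $k$-forms to $(2n-k)$-forms with $2n-k<n$.

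For $k<n$ and $\varphi\in\mathcal D^k(X)$, we first reduce to harmonic-like components. Take the $L^2$ pairing $\iinner{L\varphi,L\varphi}=\iinner{d(\eta\wedge\varphi),L\varphi}-\iinner{\eta\wedge d\varphi,L\varphi}$. Since $\varphi$ has compact support, integration by parts gives $\iinner{d(\eta\wedge\varphi),L\varphi}=\iinner{\eta\wedge\varphi,d^*L\varphi}$. The Kähler identity $[d^*,L]=d^c$ (up to sign and a constant), that is $d^*L=Ld^*+ c\,d^c$ with $|d^c\varphi|$ controlled by $d,d^*$ in $L^2$, lets us bound $\|d^*L\varphi\|\le C(\|d\varphi\|+\|d^*\varphi\|)$. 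Here $\|d^c\varphi\|^2\le\|d\varphi\|^2+\|d^*\varphi\|^2$ follows from $\Delta_{d^c}=\Delta_d$ in the Kähler case, applied via $\iinner{\Delta\varphi,\varphi}$. Combining this with $|\eta\wedge\psi|\le|\eta|\,|\psi|$ and the boundedness of $L$ gives
\[
a_k^2\|\varphi\|^2\le\|L\varphi\|^2\le C\|\eta\|_{L^\infty}\|\varphi\|\bigl(\|d\varphi\|+\|d^*\varphi\|\bigr).
\]
Squaring and dividing by $\|\varphi\|^2$ yields $\|d\varphi\|^2+\|d^*\varphi\|^2\ge \frac{c_k}{\|\eta\|^2_{L^\infty}}\|\varphi\|^2$ with $c_k$ depending only on $n,k$.

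The main obstacle is the careful bookkeeping of the commutator $[d^*,L]$. One must handle the term $d^c\varphi$ rather than naively commuting $d^*$ with $L$, and ensure that all constants, namely the pointwise Lefschetz lower bound and the Kähler identity constants, depend only on $(n,k)$. If the $d^c$ route is awkward, I would instead work with $L^{n-k}$ directly: since $\omega^{n-k}=d(\eta\wedge\omega^{n-k-1})$ and $L^{n-k}\varphi$ has degree $2n-k>n$, I would estimate $\|L^{n-k}\varphi\|^2=\iinner{d(\eta\wedge\omega^{n-k-1}\wedge\varphi)\pm\eta\wedge\omega^{n-k-1}\wedge d\varphi,\,L^{n-k}\varphi}$. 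The first term is handled via $d^*L^{n-k}\varphi=\pm\ast d\ast L^{n-k}\varphi$ together with the fact that $\ast L^{n-k}\varphi$ is a constant multiple of the Lefschetz dual of $\varphi$, so its $d$ is controlled by $d\varphi,d^*\varphi$. This is exactly where the restriction $k\ne n$ enters: for $k=n$ there is no nontrivial power of $L$ to exploit.
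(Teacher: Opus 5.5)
Your main argument is correct, but it is not the route the paper sketches. The paper, following Gromov, lifts to the complementary degree $\psi=L^{n-k}\varphi\in\mathcal D^{2n-k}(X)$ and writes $\psi=d\theta-\psi'$ with $\theta=\varphi\wedge\eta\wedge\omega^{n-k-1}$. It relies on $L^{n-k}$ being bijective from $k$-forms onto $(2n-k)$-forms, and your fallback paragraph is essentially that argument.

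Your primary route applies $L$ only once. You only need injectivity of $L$ on $k$-forms for $k<n$, and this comes with an explicit bound: $[L,\Lambda]=(k-n)$ gives $|L\varphi|^2=|\Lambda\varphi|^2+(n-k)|\varphi|^2\ge (n-k)|\varphi|^2$. You then absorb $d^*L\varphi=Ld^*\varphi+c\,d^c\varphi$, via the K\"ahler identity, using $\norm{d^c\varphi}^2\le\iinner{\Delta_{d^c}\varphi,\varphi}=\iinner{\Delta\varphi,\varphi}$.

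This is shorter, avoids any bookkeeping of $\ast L^{n-k}$ on non-primitive forms, and keeps the constants transparent. For $k=0$ one has $Ld^*f=0$, $|f\omega|^2=nf^2$ and $|d^*(f\omega)|=|df|$. Tracking constants then gives $n\norm{f}^2\le 2\norm{\eta}_{L^\infty}\norm{f}\norm{df}$, i.e.\ the sharp $c_0=n^2/4$. The $L^{n-k}$ formulation is the classical one, and the one refined in \cite{Cho_Choi_Lee2026} to get explicit $c_k$ in all degrees. The reduction of $k>n$ via $\ast$ is the same in both.

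Minor fixes:
\begin{enumerate}
\item The identity is $L\varphi=d(\eta\wedge\varphi)+\eta\wedge d\varphi$; the sign error is harmless.
\item $L^{n-k}$ is an isomorphism, but for $2\le k<n$ it is not an isometry up to a single constant, because the scaling depends on the Lefschetz component. Only the lower bound above is needed.
\item For the same reason, the claim in your fallback that $\ast L^{n-k}\varphi$ is a fixed multiple of a dual of $\varphi$ is only immediate for primitive $\varphi$. That route would need componentwise control, e.g.\ via the fact that $\Delta$ commutes with the Lefschetz projections.
\end{enumerate}
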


Since $d^{\ast}\varphi=0$ and $\norm{d\varphi}=\norm{\nabla\varphi}$ for $\varphi\in \mathcal{D}(X)$, the theorem above together with (\ref{eqn: Rayleigh quotient smooth}) implies the positivity of the bottom of the spectrum of K\"ahler hyperbolic manifolds:
\begin{equation}\label{ineq:bottom lower bound Kahler hyp.}
\lambda_0(X,\omega)\geq \frac{c_0}{\norm{\eta}^2_{L^\infty}}>0.
\end{equation}
We also refer the reader to 0.4.A Theorem in \cite{Gromov1991} for some topological consequences of the theorem. To obtain the estimate, Gromov considers the norm of $\psi:=L^{n-k}\varphi\in \mathcal{D}^{2n-k}(X)$ when $k<n$,  where $ L(\varphi):=\omega\wedge \varphi$  denotes the Lefschetz map. In particular, the Leibniz rule
\begin{align*}
	\psi=\varphi\wedge\omega^{n-k}
	&=
	d\paren{\varphi\wedge\eta\wedge\omega^{n-k-1}}
	-
	d\varphi\wedge\eta\wedge\omega^{n-k-1}
	=d\theta-\psi'
\end{align*}
is crucial in the proof. Here, $\theta:=\varphi\wedge\eta\wedge\omega^{n-k-1}$ and $\psi':=d\varphi\wedge\eta\wedge\omega^{n-k-1}$. If $k>n$, apply the Hodge star operator to $\varphi$ and proceed as before.

 To the best of the authors' knowledge,  an explicit formula for the constant $c_k$ first appeared in \cite{Cho_Choi_Lee2026} where we used the aforementioned Gromov's method. The constant $c_0=n^2/4$ obtained in \cite{Cho_Choi_Lee2026} in particular turns out to be a sharp constant; see the remark below Theorem \ref{thm; complex McKean}. Therefore, we conclude from (\ref{ineq:bottom lower bound Kahler hyp.}) the following sharp lower bound for the bottom of the spectrum of K\"ahler hyperbolic manifolds. 
\begin{theorem}[\cite{Gromov1991} \cite{Cho_Choi_Lee2026}]\label{thm: bottom Kahler hyperbolic mflds}
If $(X,\omega)$ is K\"ahler hyperbolic with $\omega=d\eta$, $\norm{\eta}_{L^\infty}<+\infty$, then
\[
\lambda_0(X,\omega)\geq \frac{n^2}{4\norm{\eta}^2_{L^\infty}}.
\]
\end{theorem}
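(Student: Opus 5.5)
The plan is to bypass the general Gromov estimate for $k$-forms and work directly with functions, in the spirit of the proof of Theorem \ref{thm;McKean}, using the primitive $\eta$ in place of the distance function. By (\ref{eqn: Rayleigh quotient smooth}) it suffices to show, for every real-valued $f\in\mathcal D(X)$, $f\neq 0$,
\[
\frac{n}{2\norm{\eta}_{L^\infty}}\norm{f}^2\le \norm{f}\,\norm{df}.
\]
Dividing by $\norm f$ and squaring then gives the Rayleigh quotient bound $\norm{df}^2/\norm f^2\ge n^2/(4\norm{\eta}^2_{L^\infty})$.

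First I would normalize the volume form as $dV_\omega=\omega^n/n!$. Since $\omega$ is closed, $\omega^n=d\eta\wedge\omega^{n-1}=d(\eta\wedge\omega^{n-1})$. Because $f^2$ has compact support, Stokes' theorem gives
\[
\int_X f^2\,\omega^n=\int_X f^2\,d(\eta\wedge\omega^{n-1})=-\int_X d(f^2)\wedge\eta\wedge\omega^{n-1}=-2\int_X f\,df\wedge\eta\wedge\omega^{n-1}.
\]
This is the function-level version of Gromov's decomposition $\psi=d\theta-\psi'$ for $k=0$.

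The key step, which I expect to be the only real obstacle, is the sharp pointwise inequality for real $1$-forms $\alpha,\beta$:
\[
\big|\alpha\wedge\beta\wedge\omega^{n-1}\big|\le \frac1n\,|\alpha|_\omega|\beta|_\omega\,\omega^n,
\]
understood as a comparison of top-degree forms. To prove it, I would choose at a point orthonormal coordinates with $\omega=\sum_j dx_j\wedge dy_j$. Then $\omega^{n-1}=(n-1)!\sum_j\widehat{(dx_j\wedge dy_j)}$, where the hat denotes the product of all the other factors $dx_k\wedge dy_k$, $k\neq j$. Only the $dx_j\wedge dy_j$ components of $\alpha\wedge\beta$ survive, so
\[
\alpha\wedge\beta\wedge\omega^{n-1}=\frac{1}{n}\sum_j(\alpha_{x_j}\beta_{y_j}-\alpha_{y_j}\beta_{x_j})\,\omega^n=\frac1n\,\langle J\alpha,\beta\rangle\,\omega^n
\]
up to sign, where $J$ is the complex structure acting on covectors. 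Since $J$ is an isometry, Cauchy--Schwarz gives the factor $|\alpha||\beta|/n$. The constant $1/n$ is exactly what produces the sharp $n^2/4$.

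Combining the two steps with $|\eta|_\omega\le\norm{\eta}_{L^\infty}$ and Cauchy--Schwarz in $L^2$ yields
\[
\int_X f^2\,dV_\omega\le \frac{2}{n}\norm{\eta}_{L^\infty}\int_X|f|\,|df|\,dV_\omega\le\frac2n\norm{\eta}_{L^\infty}\norm f\,\norm{df}.
\]
This is the desired inequality. Since $|\nabla f|=|df|$, applying (\ref{eqn: Rayleigh quotient smooth}) concludes the proof.
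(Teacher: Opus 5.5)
Your proof is correct and is essentially the paper's route: it is Gromov's decomposition $\psi=d\theta-\psi'$ specialized to $k=0$, with $\psi=f\omega^n$, $\theta=f\eta\wedge\omega^{n-1}$ and $\psi'=df\wedge\eta\wedge\omega^{n-1}$. Your Stokes identity for $f^2$ is, up to a factor $n!$, the identity $\norm{\psi}^2=\iinner{\theta,d^{\ast}\psi}-\iinner{\psi',\psi}$ in which the two terms coincide, and your sharp pointwise bound $|\alpha\wedge\beta\wedge\omega^{n-1}|\le\frac1n|\alpha|\,|\beta|\,\omega^n$ is what produces the constant $c_0=n^2/4$ of \cite{Cho_Choi_Lee2026}.
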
 

If $\omega$ admits a global K\"ahler potential, then the theorem reduces to Proposition 2.1 in \cite{Li_Tran2010}. 
\begin{remark}
One can also define the bottom of the spectrum $\lambda_0^k(X)$ of $\Delta$ on $\Omega^k(X)$ as
\[
\lambda_0^k(X)= \inf\set{\frac{\iinner{\Delta\varphi,\varphi}}{\norm{\varphi}^2}:\varphi\in\mathcal{D}^k(X)}.
\]
Then Theorem 1.2 in \cite{Cho_Choi_Lee2026} implies the following lower bound for $\lambda_0^k$ on K\"ahler hyperbolic manifolds:
\begin{equation*}
	\lambda_0^k(X)\ge \frac{c_k}{\norm{\eta}^2_{L^\infty}}>0,
\end{equation*}
where $c_k$ is an explicit constant depending only on $k, n$. But it seems that the lower bound is not sharp if $k\geq 1$. See Remark 4.4 in \cite{Cho_Choi_Lee2026}.
\end{remark}

\section{K\"ahler potentials of minimal constant gradient length on bounded symmetric domains}
 
 The greatest lower bound for the bottom of a K\"ahler hyperbolic manifold $(X,\omega)$ obtainable from Theorem \ref{thm: bottom Kahler hyperbolic mflds} is determined by the number
 \[
\mathsf{M}(X,\omega):=\inf\bigg\{\norm{\eta}^2_{L^{\infty}}:\eta~\text{is a global 1-form on X such that}~ \omega=d\eta\bigg\}.
 \]
 We shall call $\mathsf{M}(X,\omega)$ the $\textit{K\"ahler hyperbolicity modulus}$ of $(X,\omega)$, following \cite{Choi_Lee2026}. A natural candidate $\eta$ that realizes $L_X$ is the one with the constant pointwise length $|\eta|_{\omega}$. If $\varphi$ is a global K\"ahler potential of $(X,\omega)$ so that $\omega=dd^c\varphi$ and $\eta:=d^c\varphi$ has constant norm, then $\varphi$ is a constant gradient length potential. 
 
 Kai-Ohsawa \cite{Kai_Ohsawa2007}  showed that there exists a constant gradient length potential of the complete K\"ahler-Einstein metric (which equals to the Bergman metric in this case) on any bounded homogeneous domain $\Omega$, using the fact that  $\Omega$ is  biholomorphic to a homogeneous Siegel domain $D$ that admits the constant gradient length potential.  The following theorem generalizes the aforementioned theorem for local K\"ahler potentials on bounded symmetric domains.
   
   \begin{theorem}[Theorem 1.4 in \cite{Choi_Lee_Seo2026}, Theorem 6.1 \cite{Cho_Choi_Lee2026}]\label{thm:CLS}
   	Let $\Omega=\Omega_1\times\cdots\times \Omega_s$ be a product of irreducible bounded symmetric domains and $\omega_{\textup{KE}}$ the complete K\"ahler-Einstein metric on $\Omega$ with $-k<0$ being the supremum of the holomorphic sectional curvature of $\omega$.
   	\begin{enumerate}
\setlength\itemsep{0.4em}
   	\item    	If there is a local potential function $\varphi$ of $\omega$ with constant gradient length, then   	$|\partial \varphi|^2_{\omega}=1/k$ .
   	 \item For any global 1-form $\eta$ on $\Omega$ such that $\omega=d\eta$, we have
   	\[
   	\norm{\eta}^2_{L^{\infty}}\geq \frac{1}{k}.
   	\]
   	\end{enumerate}
   \end{theorem}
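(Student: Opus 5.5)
The plan is to prove part (2) first by reducing to a one-dimensional extremal situation, and then to obtain part (1) from a pointwise curvature identity combined with the bound in (2).

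For (2), I would first identify a holomorphic direction where the holomorphic sectional curvature equals its supremum $-k$. For an irreducible factor $\Omega_j$, the maximum of the holomorphic sectional curvature of $\omega_{\textup{KE}}$ is attained exactly in the rank-one (characteristic) directions. The holomorphic curve tangent to such a direction is a minimal disc $\iota:\mathbb{D}\to\Omega_j$, which is totally geodesic. Hence $\iota^*\omega$ is a Poincar\'e metric of constant curvature equal to the supremum on $\Omega_j$. For the product $\Omega$, I would check from the block-diagonal form of the curvature tensor that the supremum $-k$ over $\Omega$ equals the largest of the factor suprema. It is therefore realized by a minimal disc in one factor, which remains totally geodesic in $\Omega$.

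Pulling back, $\iota^*\omega=d(\iota^*\eta)$ and $|\iota^*\eta|_{\iota^*\omega}\le|\eta|_\omega$ pointwise, so it suffices to prove the one-dimensional case. On $(\mathbb{D},\omega_{-k})$, apply Stokes' theorem on the geodesic ball $B_r$:
\[
\frac{2\pi}{k}\bigl(\cosh(\sqrt{k}\,r)-1\bigr)=\int_{B_r}\omega=\int_{\partial B_r}\eta\le \norm{\eta}_{L^\infty}\,\frac{2\pi}{\sqrt{k}}\sinh(\sqrt{k}\,r).
\]
Letting $r\to\infty$ gives $\norm{\eta}_{L^\infty}\ge 1/\sqrt{k}$, which is (2).

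For (1), let $\omega=i\partial\bar\partial\varphi$ locally, so $\varphi_{i\bar j}=g_{i\bar j}$, and suppose $|\partial\varphi|^2=g^{i\bar j}\varphi_i\varphi_{\bar j}\equiv c$. I would differentiate this identity using $\nabla_{\bar j}\varphi_i=g_{i\bar j}$. The first derivative gives $\varphi^i\nabla_k\nabla_i\varphi=-\varphi_k$. Applying $\nabla_{\bar l}$, commuting covariant derivatives to produce the curvature term, and contracting with $\varphi^k\varphi^{\bar l}$ should give an identity of the form
\[
R(v,\bar v,v,\bar v)= -\frac{|v|^4}{c}-(\text{a nonnegative term involving }|\nabla\nabla\varphi|^2),\qquad v=(\partial\varphi)^\sharp .
\]
Dividing by $|v|^4=c^2$ yields $H(v)\le -1/c$, so $-k\ge H(v)$ forces an inequality between $c$ and $1/k$. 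Next, $\eta=d^c\varphi$ has constant length determined by $c$. If the pointwise identity only holds locally, I would rerun the Stokes argument of (2) on the minimal disc tangent to $v$ with the local primitive on a small domain, or use homogeneity to propagate. This should supply the reverse inequality, and the two together give $|\partial\varphi|^2_\omega=1/k$.

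I expect the main obstacle to be the second-order computation in (1). Specifically, I need the sign and the exact constant in the curvature identity, and for a merely \emph{local} potential I must show that the error term vanishes, for instance by showing that $v$ is a characteristic direction. Without that, only a one-sided inequality comes out of the computation. As a fallback, I would use the explicit curvature formula for bounded symmetric domains in a Harish-Chandra realization. Within that fallback, I would first try to show that the constant-length condition forces $v$ to be tangent to a minimal disc, where the computation becomes exactly the one-dimensional Poincar\'e case.
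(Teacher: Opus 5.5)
Your route to (2) is sound: restrict $\eta$ to a totally geodesic holomorphic disc and compare area with the length of geodesic circles. But you restrict to the wrong disc, because you have the extremal directions reversed.

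In an irreducible bounded symmetric domain of rank $r$, characteristic (rank-one) directions realize the \emph{infimum} of the holomorphic sectional curvature, not the supremum. On a totally geodesic maximal polydisc whose factors have curvature $-a$ one has $H(v)=-a\sum_i|v_i|^4/(\sum_i|v_i|^2)^2\in[-a,-a/r]$. Every tangent vector is conjugate under the isotropy group to one tangent to such a polydisc. Hence the supremum $-k=-a/r$ is attained along the diagonal directions $|v_1|=\cdots=|v_r|$, whereas minimal discs have curvature $-rk$.

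Likewise, on a product the supremum is not the largest factor supremum. Since $R(v,\bar v,v,\bar v)=\sum_j R_j(v_j,\bar v_j,v_j,\bar v_j)$, Cauchy--Schwarz gives $1/k=\sum_j 1/k_j$; for instance, the bidisc has supremum $-a/2$, not $-a$. On a minimal disc your Stokes argument only gives $\|\eta\|^2_{L^\infty}\ge 1/(rk)$ in the irreducible case, and even the correct disc inside one factor only gives $\max_j 1/k_j$.

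The fix is to take the diagonal $z\mapsto(z,\dots,z)$ of the totally geodesic polydisc formed by maximal polydiscs of all the factors. It is totally geodesic, since rescaled Poincar\'e metrics have the same geodesics, and its induced metric has constant curvature exactly $-k$. Your computation then proves (2) verbatim.

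Part (1) has a genuine gap. From $H(v)\le -1/c$ and $H(v)\le -k$ nothing follows about $c$ versus $1/k$. Moreover, the contraction with $\varphi^k\varphi^{\bar l}$ loses nothing. The term $(\nabla_{\bar l}\varphi^i)(\nabla_k\nabla_i\varphi)\varphi^k\varphi^{\bar l}$ is evaluated exactly by your first-order identity $\varphi^i\nabla_k\nabla_i\varphi=-\varphi_k$ and its conjugate. So you obtain the exact identity $H(v)=-1/c$ (normalized as in the statement), and $H\le -k$ then gives only $c\le 1/k$.

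For a \emph{global} potential the corrected (2) would supply $c\ge 1/k$. The statement, however, is about \emph{local} potentials, and none of your suggestions gives the reverse inequality there:
\begin{itemize}
\item Stokes with a primitive on a small domain yields nothing, since the constant $1/\sqrt k$ only emerges as the radius tends to infinity.
\item Homogeneity does not glue translates of a local potential into a global one.
\item Showing that $v$ is characteristic would force $c=1/(rk)$, the opposite of what is claimed.
\end{itemize}

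The paper's proof rests on the observation that the integral curves of $\mathcal V=\nabla^{1,0}\varphi$ are totally geodesic holomorphic curves of constant curvature. Your identities already show that $\nabla_v v$ and $\nabla_{\bar v}v$ are multiples of $v$. This reduces $|\partial\varphi|^2_\omega$ to a computation for the restricted metric on such a curve, and your exact identity is the infinitesimal form of that computation. What your plan still has to supply is an argument that these curves have curvature exactly $-k$. That rules out the other totally geodesic holomorphic discs: for $\Omega$ irreducible of rank $r$, those tangent to tripotents of rank $j<r$ have curvature $-rk/j$ and would give $c=j/(rk)$.
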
 
    This implies that $\mathsf{M}(\Omega,\omega_{\textup{KE}})=1/k$. An important observation in the proof of Theorem \ref{thm:CLS} is that each integral curve of the local vector field $\mathcal{V}:=\nabla^{1,0}\varphi$ is a totally geodesic holomorphic curve with constant Gaussian curvature. Surprisingly, it suffices to compute the norm of $\partial\varphi$ on such a curve with respect to the metric $\omega_{\textup{KE}}$ restricted to the curve; see \cite{Choi_Lee_Seo2026} for details.
   
   Theorem \ref{thm:CLS} together with Theorem \ref{thm: bottom Kahler hyperbolic mflds} yields the following K\"ahler version of McKean's theorem on bounded symmetric domains.
   \begin{theorem} \cite{Cho_Choi_Lee2026} \label{thm; complex McKean}
   	If $(\Omega,\omega_{\textup{KE}})$ is a bounded symmetric domain equipped with the complete K\"ahler-Einstein metric $\omega_{\textup{KE}}$ with holomorphic sectional curvature  bounded from
   	above by $-k<0$, then
   	\[
   	\lambda_0(\Omega)\geq \frac{n^2}{4}k.
   	\]
   \end{theorem}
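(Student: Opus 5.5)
The plan is to combine Theorem \ref{thm: bottom Kahler hyperbolic mflds} with the existence of a constant gradient length potential and part (1) of Theorem \ref{thm:CLS}. By Theorem \ref{thm: bottom Kahler hyperbolic mflds}, for any global primitive $\eta$ of $\omega_{\textup{KE}}$ with $\norm{\eta}_{L^\infty}<\infty$ we have $\lambda_0(\Omega)\ge n^2/(4\norm{\eta}^2_{L^\infty})$. So it suffices to exhibit one primitive with $\norm{\eta}^2_{L^\infty}\le 1/k$. Here $-k$ is the supremum of the holomorphic sectional curvature, which is the sharp upper bound in the hypothesis; if the hypothesis is only an upper bound $-k'$ with $k'\le k$, the claimed inequality with $k'$ follows a fortiori.

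\textbf{Step 1 (existence).} By Kai--Ohsawa \cite{Kai_Ohsawa2007}, $\Omega$ is biholomorphic to a homogeneous Siegel domain carrying a global K\"ahler potential $\varphi$ of the complete K\"ahler--Einstein (Bergman) metric. We may pull $\varphi$ back to $\Omega$ because $\omega_{\textup{KE}}$ is a biholomorphic invariant, up to the normalization fixed by the curvature. On $\Omega$, $\varphi$ satisfies $\omega_{\textup{KE}}=dd^c\varphi$ and $|\partial\varphi|_\omega$ is constant. In the product case, we take $\varphi=\sum_j \varphi_j$ with the $\varphi_j$ pulled back from the factors $\Omega_j$.

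\textbf{Step 2 (value of the constant).} Part (1) of Theorem \ref{thm:CLS} applies to this global potential, which is in particular a local one, and gives $|\partial\varphi|^2_\omega=1/k$. Put $\eta:=d^c\varphi$. Then $d\eta=\omega_{\textup{KE}}$. Since $\varphi$ is real, $|d^c\varphi|^2_\omega$ equals $|\partial\varphi|^2_\omega$ up to the fixed normalization constant of $d^c$. With the convention implicit in the statement of Theorem \ref{thm:CLS}, where $\mathsf{M}(\Omega,\omega_{\textup{KE}})=1/k$ is asserted, this gives $\norm{\eta}^2_{L^\infty}=1/k$.

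\textbf{Step 3 (conclusion).} Inserting Step 2 into Theorem \ref{thm: bottom Kahler hyperbolic mflds} yields $\lambda_0(\Omega)\ge n^2k/4$.

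The main obstacle is the normalization bookkeeping in Step 2. One must check that the conventions for $d^c$, for $|\cdot|_\omega$ on real $1$-forms versus $(1,0)$-forms, and for curvature are consistent, so that $|\partial\varphi|^2_\omega=1/k$ indeed gives $\norm{d^c\varphi}^2_{L^\infty}=1/k$ and not a multiple of it. I would verify this on the disc, or equivalently the half-plane example in the paper with curvature $-K$. There $\eta=-dy/(Kx)$ has $|\eta|^2=1/K$. Comparing with the bound $\lambda_0\ge K/4$ for $n=1$ shows that the constants match and that the bound is sharp on the hyperbolic plane, where $\lambda_0=K/4$.

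A secondary point is the product case. The holomorphic sectional curvature supremum $-k$ of the product is the maximum over the factors. Part (1) of Theorem \ref{thm:CLS} is stated for the full product, so it should directly give $1/k$ for the summed potential, and I would rely on it rather than recompute factorwise.
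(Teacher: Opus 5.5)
Your proposal is the paper's own argument. Kai--Ohsawa's constant gradient length potential, with its value fixed at $1/k$ by Theorem~\ref{thm:CLS}(1), gives a primitive $\eta=d^c\varphi$ with $\|\eta\|^2_{L^\infty}=1/k$, and Theorem~\ref{thm: bottom Kahler hyperbolic mflds} then yields $\lambda_0(\Omega)\ge n^2k/4$; part (2) of Theorem~\ref{thm:CLS} is not needed for the inequality.

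One aside is wrong, though the proof does not depend on it: the supremum of the holomorphic sectional curvature of $\Omega_1\times\cdots\times\Omega_s$ is not the maximum over the factors but $-(\sum_j 1/k_j)^{-1}$, which is $-K/n$ for $\mathbb{D}^n$ with factors of curvature $-K$. This value is exactly what makes $|\partial\varphi|^2_\omega=\sum_j 1/k_j=1/k$ for your summed potential and makes the bound $nK/4=\lambda_0(\mathbb{D}^n)$ sharp, whereas your reading would give the false inequality $\lambda_0(\mathbb{D}^n)\ge n^2K/4$.
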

   The constant $k$ is a simple function of the geometric invariants (rank and the genus) of the domain; see Section 6 in \cite{Cho_Choi_Lee2026}.  As expected, the lower bound is sharp as it becomes the bottom when the domain is the unit ball or a polydisc. There are several results regarding upper bounds on the bottom of the spectrum in the K\"ahler setting; see Section \ref{Section: upper bound}. But the sharp lower bound for the bottom of the spectrum of bounded symmetric domains we present here seems to be new. 
   
   At this juncture,  it would be natural to present the following question in the light of the proof of Theorem \ref{thm;McKean}.
   \begin{question*}
   	Can one establish a Laplacian comparison theorem on bounded symmetric domains to prove Theorem \ref{thm; complex McKean}?
   	  \end{question*}
        
    We also remark that Ballmann's arguments for the K\"ahler hyperbolicity of simply-connected, negatively curved K\"ahler manifolds together with Gromov's estimate yields a lower bound similar to McKean's bound.
    \begin{theorem}[8.4 Proposition in \cite{Ballman06}]
    	If $(X,\omega)$ is a simply-connected complete K\"ahler manifold with Riemann sectional curvature bounded from above by $-k<0$, then there exists a global 1-form $\eta$ on $X$ such that $\omega=d\eta$ and 
    	\[
    	\norm{\eta}^2_{L^\infty}\leq \frac{1}{k}
    	\] 
    	so that
    	\[
    	 \lambda_0(X)\geq \frac{n^2}{4}k
    	\]
    \end{theorem}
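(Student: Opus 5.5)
The plan is to produce $\eta$ from a global Kähler potential built out of the distance function, and then apply Theorem \ref{thm: bottom Kahler hyperbolic mflds}.

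Fix $p\in X$ and let $\rho=d(\cdot,p)$. By Cartan--Hadamard, $X$ is diffeomorphic to $\mathbb{R}^{2n}$ via $\exp_p$, and $\rho$ is smooth on $X\setminus\{p\}$. Since $\omega$ is closed and $H^2(X)=0$, some primitive exists. The point is to choose a primitive with controlled norm. Following Ballmann, I will take
\[
\eta:=\int_0^1 h_t^{\ast}(\iota_{\partial_t}\,\cdot)\ \text{(radial homotopy primitive)},
\]
that is, the Poincaré-lemma primitive for the contraction $h_t(\exp_p v)=\exp_p(tv)$. Concretely, at $x=\exp_p(v)$ with $r=|v|$, the form $\eta_x$ evaluated on a vector $w$ equals the $\omega$-area swept between the radial geodesic and the Jacobi field determined by $w$:
\[
\eta_x(w)=\int_0^r \omega\big(\dot\gamma(s),J_w(s)\big)\,ds .
\]
Here $J_w$ is the Jacobi field along the geodesic $\gamma$ from $p$ to $x$ with $J_w(0)=0$ and $J_w(r)=w$. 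Then $d\eta=\omega$ is the standard homotopy formula.

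The main step is the estimate $|\eta_x(w)|\le |w|/\sqrt{k}$. We have
\[
|\omega(\dot\gamma,J_w)|\le |J_w^{\perp}(s)|,
\]
where $J_w^{\perp}$ is the component normal to $\dot\gamma$. By Rauch comparison with curvature $\le -k$, the normal Jacobi field grows at least like in the space form of curvature $-k$. Hence
\[
|J_w^\perp(s)|\le \frac{\sinh(\sqrt{k}s)}{\sinh(\sqrt{k}r)}\,|w^\perp|,
\]
using the monotonicity of $|J|/\sinh(\sqrt k s)$. Integrating gives
\[
|\eta_x(w)|\le |w|\,\frac{\cosh(\sqrt{k}r)-1}{\sqrt{k}\,\sinh(\sqrt{k}r)}=\frac{|w|}{\sqrt k}\tanh\!\Big(\frac{\sqrt k r}{2}\Big)\le \frac{|w|}{\sqrt k}.
\]
This yields $\|\eta\|_{L^\infty}^2\le 1/k$. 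The tangential component of $w$ contributes nothing, since $\omega(\dot\gamma,\dot\gamma)=0$.

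This comparison step is the main obstacle. Two things need care. First, I must verify the ratio monotonicity $|J|(s)/\sinh(\sqrt k s)$ nondecreasing under $K\le -k$; this is the standard Rauch argument, using that the index form controls $|J|'' \ge k|J|$. Second, I must check smoothness of $\eta$ at $p$, where the bound shows $\eta$ is $O(r)$ and hence continuous. If smoothness at $p$ is an issue, I will mollify near $p$, or note that Theorem \ref{thm: Gromov vanishing}'s proof only uses $\omega=d\eta$ distributionally with $\eta$ bounded.

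Finally, Theorem \ref{thm: bottom Kahler hyperbolic mflds} gives
\[
\lambda_0(X)\ge \frac{n^2}{4\|\eta\|_{L^\infty}^2}\ge \frac{n^2}{4}k .
\]
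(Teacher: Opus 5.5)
Your proof is correct and follows the route the paper indicates: the Poincar\'e-lemma primitive $\eta=\int_0^1\iota_{\partial_t}H^{\ast}\omega\,dt$ for the radial contraction $H(t,x)=\exp_p(t\exp_p^{-1}x)$, bounded by Rauch comparison as in Lemma 3.2 of \cite{Chen_Yang18}, which in fact gives the sharper pointwise bound $|\eta|\le k^{-1/2}\tanh(\sqrt{k}\,r/2)$. Your hedge about smoothness at $p$ is unnecessary, since $H$ is smooth on $[0,1]\times X$ by Cartan--Hadamard and hence so is $\eta$; likewise, your opening mention of a K\"ahler potential built from the distance function plays no role in the argument.
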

    The idea of the proof is to use the exponential map and the Poincar\'e lemma to construct the 1-form $\eta$. Then the Rauch comparison theorem implies the desired estimate on $\eta$; see Lemma 3.2 in \cite{Chen_Yang18} for the proof. As the lower bound is even smaller than McKean's bound, we present the  following
    \begin{question*}
    Is it possible to formulate a K\"ahler version of the Rauch comparison theorem and sharpen the estimate?
    \end{question*}
    To answer the question, it suffices to control the norm of the $J\gamma'$-component of a Jacobi field on a given geodesic $\gamma$ in terms of the holomorphic (bi)sectional curvature. But this seems to be nontrivial.

\section{Upper bounds on the bottom of the spectrum}\label{Section: upper bound}
A line of research concerning the upper bound of $\lambda_0$ starts with the following celebrated
\begin{theorem}[\cite{Cheng1975}] \label{thm; Cheng}
If $(X,g)$ is a complete Riemannian manifold
of dimension $n$ with the Ricci curvature bounded from below by a negative number $-(n-1)k$, then
\[
\lambda_0(X)\leq \frac{(n-1)^2k}{4}.
\]
\end{theorem}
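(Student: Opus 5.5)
We plan to test the Rayleigh quotient (\ref{eqn: Rayleigh quotient smooth}) with radial exponentially decaying functions and to use volume comparison to control the growth of geodesic balls. Fix a point $p\in X$ and let $\rho(\cdot):=d(\cdot,p)$, which is Lipschitz with $|\nabla\rho|=1$ almost everywhere. Since $\mathrm{Ric}\geq -(n-1)k$, the Bishop--Gromov volume comparison theorem gives
\[
V(R):=\mathrm{Vol}(B_p(R))\leq \mathrm{Vol}_{-k}(R)=C_n\int_0^R\Big(\frac{\sinh(\sqrt{k}t)}{\sqrt{k}}\Big)^{n-1}dt\leq C e^{(n-1)\sqrt{k}R}
\]
for $R\geq 1$. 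Here $\mathrm{Vol}_{-k}(R)$ is the volume of a ball of radius $R$ in the simply-connected space form of curvature $-k$. This is the only place where the curvature hypothesis enters.

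For $\delta>0$ we consider $f_\delta:=e^{-\frac12((n-1)\sqrt{k}+\delta)\rho}$. First we check that $f_\delta\in L^2(X)$. By the coarea formula (or by summing over the annuli $B_p(j+1)\setminus B_p(j)$), we have
\[
\int_X f_\delta^2\,dV=\int_0^\infty e^{-((n-1)\sqrt{k}+\delta)t}\,dV(t)=((n-1)\sqrt{k}+\delta)\int_0^\infty e^{-((n-1)\sqrt{k}+\delta)t}V(t)\,dt<\infty,
\]
where we integrated by parts and used the exponential volume bound. Since $|\nabla f_\delta|^2=\frac14((n-1)\sqrt{k}+\delta)^2 f_\delta^2$ almost everywhere, we obtain
\[
\frac{\int_X|\nabla f_\delta|^2dV}{\int_X f_\delta^2dV}=\frac{((n-1)\sqrt{k}+\delta)^2}{4}.
\]

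Next we justify using $f_\delta$ as a test function in (\ref{eqn: Rayleigh quotient smooth}). We truncate, setting $f_{\delta,R}:=\chi_R f_\delta$, where $\chi_R$ is a Lipschitz cutoff equal to $1$ on $B_p(R)$, vanishing outside $B_p(2R)$, and satisfying $|\nabla\chi_R|\leq 1/R$. The error terms are bounded by $R^{-2}\int f_\delta^2$ and by the tails of the convergent integrals $\int f_\delta^2$ and $\int|\nabla f_\delta|^2$, so they tend to $0$ as $R\to\infty$. Compactly supported Lipschitz functions can be approximated in the graph norm by functions in $\mathcal D(X)$ via mollification. Hence
\[
\lambda_0(X)\leq \frac{((n-1)\sqrt{k}+\delta)^2}{4}
\]
for every $\delta>0$, and letting $\delta\to0$ gives the claim.

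The main obstacle is the integrability step. The threshold exponent $(n-1)\sqrt{k}$ is exactly the volume growth rate, so the argument needs the sharp Bishop--Gromov comparison and must keep the extra $\delta$ until the very end. Since $\rho$ is not smooth at the cut locus, we also have to make sure that only the Lipschitz property $|\nabla\rho|\leq 1$ is used.
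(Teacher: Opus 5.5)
Your proof is correct, but it takes a different route from the one the paper indicates.

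The paper obtains the bound from Cheng's comparison of Dirichlet eigenvalues on balls, $\lambda_1(B_p(R))\le\lambda_1(B^{-k}(R))$, where $B^{-k}(R)$ is the ball of radius $R$ in the space form of curvature $-k$. It then uses $\lambda_0(X)=\lim_{R\to\infty}\lambda_1(B_p(R))$ and $\lambda_1(B^{-k}(R))\to(n-1)^2k/4$. Alternatively, following Li's book, it runs a Bochner argument on the gradient of the first Dirichlet eigenfunction.

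You instead prove the general fact (due to Brooks) that $\lambda_0\le\mu^2/4$, where $\mu=\limsup_{R\to\infty}R^{-1}\log\mathrm{Vol}(B_p(R))$ is the exponential volume growth rate. You then feed in Bishop--Gromov to get $\mu\le(n-1)\sqrt{k}$. Your justification of the test function (integrability via the volume bound, the cutoff error of order $R^{-1}\|f_\delta\|$, mollification of compactly supported Lipschitz functions) is sound.

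What your route buys:
\begin{itemize}
\item It is more elementary. It needs only volume comparison and the bound $|\nabla\rho|\le1$ almost everywhere. There are no eigenfunctions and no Laplacian comparison for $\rho$ across the cut locus, which Cheng's transplantation of the radial model eigenfunction requires.
\item It makes clear that the curvature enters only through volume growth. This is the same mechanism the paper attributes to Li--Wang in the K\"ahler case, where a lower bound on volume growth coming from $\lambda_0$ is played against an upper bound from a Laplacian comparison.
\end{itemize}

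What Cheng's route buys:
\begin{itemize}
\item It gives the finer, scale-by-scale statement that every geodesic ball has first Dirichlet eigenvalue at most that of the model ball of the same radius.
\item Its gradient-estimate (Bochner) version is what feeds into the rigidity theorems of Li--Wang and Munteanu discussed later, where equality forces $\log h$ to have constant gradient length. A pure volume-growth argument gives no such structural information in the equality case.
\end{itemize}
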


Note that the bound is sharp as it is attained by the real hyperbolic space. The proof follows from Cheng's estimate for the first Dirichlet eigenvalue on a relatively compact subdomain $\Omega$ of $X$ since  $\lambda_0(X)$ is the decreasing limit of the first eigenvalues as $\Omega$ exhausts the manifold. See also Theorem 6.1 in \cite{Li12} for the proof using the Bochner technique on the norm of the gradient of the first eigenfunction on $\Omega$.

A K\"ahler version of Cheng's theorem was first proved by Li-Wang in \cite{Li_Wang2005}. It says that, if the holomorphic bisectional curvature $BK_{X}$ of a complete K\"ahler manifold $(X^n,\omega)$ satisfies $BK_X\geq -1$, i.e.,
\[
R_{\alpha\bar{\alpha}\beta\bar{\beta}}\geq -1(1+\delta_{\alpha\beta})
\]
for any unitary frame $\{e_{\alpha}\}$, then $\lambda_0(X)\leq n^2$. As $\lambda_0$ determines a lower bound for the growth of the volume of geodesic balls \cite{Li_Wang01}, it suffices to  bound the volume from above by the volume of a geodesic ball on the complex hyperbolic space. Then the authors use the Bochner technique on the norm of the gradient of the distance function to obtain the Laplacian comparison theorem that leads to such bounds. Later, Munteanu \cite{Munteanu2009} improved their result by obtaining the same bound on $\lambda_0$ of complete K\"ahler manifolds with Ricci curvature bounded from below by $-2(n+1)$. As it is not clear whether the aforementioned methods are still available for the case, he uses delicate estimates of the integral

	\[
	\int h\cdot|(\log\,h)_{\alpha\bar{\beta}}|^2\phi^2dV,
	\]
 on certain sets of an end $E$ of $X$ involving the complex hessian. Here, $h$ is a harmonic function and $\phi$ is a bump function. The estimates yield $\lambda_0(E)\leq n^2$ and this settles the claim as the inequality $\lambda_0(X)\leq \lambda_0(E)$ always holds.  We remark that  the aforementioned upper bounds are all sharp in the sense that they are equal to the bottom of the spectrum when the domain is the complex hyperbolic space. 
\subsection{Manifolds with maximal bottom of the spectrum}
An interesting aspect in this line of research is that the isometry types of certain manifolds with the maximal bottom of the spectrum can be determined.
\begin{theorem}[\cite{Li_Wang02}]
	
Let $(X^n,g)$ be a complete $n$-dimensional Riemannian manifold with $n\geq 4$ and more than one end. If the Ricci curvature is bounded from below by $-(n-1)$ and $\lambda_0(X)=\frac{(n-1)^2}{4}$, then $X$ is isometric to $\mathbb{R}\times N$ equipped with the warped product metric
\[
g_X:=dt^2+\exp(2t)g_N
\] 
for some compact manifold $N$ with nonnegative Ricci curvature.
\end{theorem}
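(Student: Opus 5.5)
The plan is to follow the Li--Wang strategy and split the argument into three steps. First, use the two ends to produce a nonconstant bounded harmonic function or Busemann-type function. Second, show that the extremal bottom forces equality in a Bochner-type estimate. Third, read off the splitting from that equality.

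\textbf{Step 1: a barrier function with a sharp gradient identity.} Since $\lambda_0(X)=\frac{(n-1)^2}{4}>0$, the manifold is nonparabolic. Having more than one end, it carries (by the Li--Tam theory) a nonconstant positive harmonic function $f$, with $0<f<1$, that separates two ends. The positivity of $\lambda_0$ also gives decay estimates. By Li--Wang \cite{Li_Wang01}, the integral $\int_{B(R)\cap E} f^2$ decays like $e^{-2\sqrt{\lambda_0}R}$ on the end where $f\to0$.

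\textbf{Step 2: the improved Bochner inequality for harmonic functions.} Under $\mathrm{Ric}\geq -(n-1)$ this reads
\[
\Delta|\nabla f| \ \ge\ -(n-1)|\nabla f| + \frac{1}{n-1}\frac{|\nabla|\nabla f||^2}{|\nabla f|},
\]
where $\Delta$ here is the Laplace--Beltrami operator. Setting $h=|\nabla f|^{\frac{n-2}{n-1}}$ turns it into a subsolution inequality. I then take the cutoff $\phi$ and test the variational characterization \eqref{eqn: Rayleigh quotient smooth} against $\phi h$. Comparing with $\lambda_0=\frac{(n-1)^2}{4}$ should force all inequalities in the chain to be equalities. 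That chain consists of the refined Kato inequality, the Ricci lower bound in the direction $\nabla f$, and the Rayleigh quotient bound. The decay estimates from Step 1 make the cutoff error terms vanish as $R\to\infty$.

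\textbf{Step 3: the splitting.} Equality in the refined Kato inequality gives $\nabla^2 f$ the special form with eigenvalues $(a,-a/(n-1),\dots)$ relative to $\nabla f/|\nabla f|$. It follows that the level sets of $f$ are totally umbilic and $|\nabla f|$ is constant on them. Take $t$ to be the arclength parameter along $\nabla f$; then $|\nabla f|$ behaves like $e^{-(n-1)t}$. The Ricci equality along $\nabla f$, together with the Codazzi/Gauss equations, gives the metric $dt^2+e^{2t}g_N$. The Gauss equation for the warped product yields $\mathrm{Ric}_N\ge0$ from $\mathrm{Ric}_X\ge-(n-1)$. Compactness of $N$ follows because the level sets are compact, since $f$ is proper along the ends.

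\textbf{Main obstacle.} I expect the difficulty to be the integral estimate of Step 2. The factor $\frac{(n-1)^2}{4}$ has to match exactly, and the errors near the critical points of $f$ and at infinity must be controlled using only the decay rate $e^{-(n-1)R}$. This borderline decay barely suffices. The dimension restriction $n\ge4$ presumably enters in exactly this choice of exponent, so that the critical exponent arithmetic closes. My first attempt would be to optimize the power of $|\nabla f|$ in the test function and invoke the sharp volume/decay estimates of \cite{Li_Wang01}.
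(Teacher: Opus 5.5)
Your proposal has a genuine gap. Steps 1--2 are the mechanism behind the $\lambda_0=n-2$ rigidity (two nonparabolic ends, $\cosh^2t$-warped model), and they cannot produce the $e^{2t}$ splitting.

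The subsolution inequality $\Delta h\ge-(n-2)h$ for $h=|\nabla f|^{\frac{n-2}{n-1}}$ carries the constant $n-2$, not $\frac{(n-1)^2}{4}$. Testing the Rayleigh quotient with $\phi h$ therefore only gives
\[
\frac{(n-3)^2}{4}\int_X\phi^2h^2=\Big(\frac{(n-1)^2}{4}-(n-2)\Big)\int_X\phi^2h^2\le\int_X|\nabla\phi|^2h^2 .
\]
For $n\ge4$ there is strict slack. If the cutoff errors vanish you get $f$ constant, which is a contradiction, never equality. This is exactly where $n\ge4$ enters: it rules out two nonparabolic ends (for $n=3$ the chain is tight and the $\cosh$ model appears). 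It has nothing to do with exponent arithmetic.

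Consequently Step 1 starts from the wrong function. Nonparabolicity of $X$ does not make every end nonparabolic. Here $X$ has exactly one infinite-volume end $E$, and the other ends are parabolic of finite volume. The Li--Tam function separating $E$ from such an end $F$ is unbounded, with $\sup_F h=+\infty$ and $\inf_E h=0$, as the paper states. No bounded separating harmonic function exists in the extremal model, since on $dt^2+e^{2t}g_N$ the $t$-dependent harmonic functions are $A+Be^{-(n-1)t}$.

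Even with this $h$, your test function fails. In the model $|\nabla h|^{\frac{n-2}{n-1}}\sim e^{-(n-2)t}$, and its square integrates over unit annuli in the cusp to roughly $e^{(n-3)R}$, so the cutoff errors blow up. Step 3 also does not select the exponential warping: equality in the refined Kato inequality and in $\mathrm{Ric}(\nabla f,\nabla f)\ge-(n-1)|\nabla f|^2$ holds in the $\cosh$ model as well. Finally, your compactness argument ($f$ proper) is unfounded for a bounded $f$.

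The missing ingredients are the sharp gradient estimate $|\nabla\log h|\le n-1$ for positive harmonic functions under $\mathrm{Ric}\ge-(n-1)$, and the test function $\phi h^{1/2}$. With the Laplace--Beltrami sign, $\Delta h^{1/2}=-\frac14|\nabla\log h|^2h^{1/2}$, so $\lambda_0=\frac{(n-1)^2}{4}$ gives
\[
\frac14\int_X\phi^2\big((n-1)^2-|\nabla\log h|^2\big)h\le\int_X|\nabla\phi|^2h .
\]
Now the constants match exactly, and the left integrand is nonnegative.

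The right side is $O(1/R)$ for a linear cutoff on $B(2R)\setminus B(R)$, because $\int h=O(1)$ on each unit annulus:
on finite-volume ends, $h\le Ce^{(n-1)r}$ while the volume beyond $r$ is at most $Ce^{-(n-1)r}$;
on $E$, the decay $\int h^2\le Ce^{-(n-1)r}$ is balanced against the volume growth $Ce^{(n-1)r}$ by Cauchy--Schwarz.
This is the borderline matching you anticipated, but it works for $h^{1/2}$, not for a power of $|\nabla f|$.

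The argument yields $|\nabla\log h|\equiv n-1$, which is what the paper's sketch refers to. Then $t=\pm\frac{1}{n-1}\log h$ satisfies $|\nabla t|=1$ with $\Delta t$ constant. For the right choice of sign, the Bochner formula with $\mathrm{Ric}\ge-(n-1)$ forces $\nabla^2t=g-dt\otimes dt$, i.e.\ $g=dt^2+e^{2t}g_N$. Your Gauss-equation argument then gives $\mathrm{Ric}_N\ge0$, and $N$ is compact because otherwise $\mathbb{R}\times N$ would have only one end.
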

  See also \cite{Li_Wang02}.  Since $X$ has one infinite volume end $E$ and a finite volume end $F$ in this case, there exists a positive harmonic function $h>0$ with finite Dirichlet integral such that
  \[
  \sup_{x\in F}h(x)=+\infty,~ ~\inf_{x\in E}h(x)=0.
  \]
  by \cite{Li_Tam92}. Then  gradient estimates on the product of the harmonic function $h$ and a well-chosen bump function together with the Poincar\'{e} inequality
\[
\lambda_0(X) \int_{X}|\phi|^2dV\leq \int_{X}|\nabla\phi|^2dV
\]
imply that $\log\,h$ has constant gradient length. Taking $t:=\log\,h/(n-1)$ as a real coordinate, one can show that the manifold is isometric to the warped product $\mathbb{R}\times N$ equipped with a warped product metric. Here, the manifold $N$ has to be compact as $X$ has more than one end. We refer the reader to the excellent book \cite{Li12} of Peter Li for details.
 
The following theorem of Munteanu provides a characterization of noncompact ball quotients in terms of the Ricci curvature lower bound and the bottom of the spectrum. 
\begin{theorem}[\cite{Munteanu2009}]\label{thm;Mun}
Let $(X^n,\omega)$ be a complete noncompact K\"ahler manifold of complex dimension $n\geq 2$ with more than one end. If the Ricci curvature of $X$ is bounded from below by $-2(n+1)$ and $\lambda_0(X)=n^2$, then it is isometric to $\mathbb{R}\times N$ for some compact manifold $N$ equipped with the warped product metric
\[
g_X=dt^2+e^{-4t}\omega_2^2+e^{-2t}(\omega_3^2+\cdots+\omega^2_{2n}),
\]
where $\{\omega_2,\dots, \omega_{2n}\}$ is an orthonormal coframe for $N$. Furthermore, if $X$ has bounded curvature, then $X$ is a quotient of the complex hyperbolic space.
\end{theorem}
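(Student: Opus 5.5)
We follow the strategy of Li--Wang for the real case (the previous theorem), adapted to the Kähler setting as in Munteanu's work. The proof has three parts: produce a function with constant gradient length, show that its level sets split the metric as a warped product, and then use the Kähler structure to pin down the warping functions.

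\emph{Step 1: a harmonic function with constant gradient length of its logarithm.} Since $\lambda_0(X)=n^2>0$ and $X$ has more than one end, the theory of Li--Tam (\cite{Li_Tam92}) applies. The sharp volume growth bounds coming from $\mathrm{Ric}\ge -2(n+1)$ and $\lambda_0=n^2$ then force $X$ to have exactly one infinite volume end $E$ and one finite volume end $F$. Moreover, $X$ carries a positive harmonic function $h$ with finite Dirichlet integral, tending to $0$ along $E$ and to $+\infty$ along $F$.

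We then imitate Munteanu's improvement of Li--Wang. Set $u=\log h$ and use the Bochner formula together with $\mathrm{Ric}\ge -2(n+1)$, but in the Kähler form: split $\nabla^2 h$ into its $(1,1)$ and $(2,0)$ parts. We integrate $h\,|(\log h)_{\alpha\bar\beta}|^2\phi^2$ against cutoffs $\phi$ supported on sublevel sets $\{a<h<b\}$ and insert the Poincaré inequality with $\lambda_0=n^2$. All inequalities in this chain are sharp exactly at $\lambda_0=n^2$, so each must be an equality in the limit. This yields:
\begin{itemize}
\item $|\nabla u|$ is constant, namely $2n$ after normalization;
\item $\nabla^2 u$ has the prescribed eigenvalue structure;
\item $\mathrm{Ric}(\nabla u,\nabla u)=-2(n+1)|\nabla u|^2$;
\item the $(2,0)$-part of the Hessian and the Kato-type defect vanish.
\end{itemize}

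\emph{Step 2: the warped product structure.} Put $t=-u/(2n)$ up to sign and an additive constant, so that $|\nabla t|=1$. From the Hessian equalities, $\nabla^2 t$ is a parallel-type endomorphism on the level sets. In the direction $J\nabla t$ it has eigenvalue $-2$. On the orthogonal complement of $\{\nabla t, J\nabla t\}$, which is $J$-invariant, it has eigenvalue $-1$ with multiplicity $2n-2$.

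Hence the flow of $\nabla t$ maps level sets to level sets, with the metric scaling by $e^{-4t}$ in the $J\nabla t$ direction and by $e^{-2t}$ on the complementary distribution. This gives the stated metric on $\mathbb{R}\times N$, where $N=\{t=0\}$ and $\omega_2$ is dual to $J\nabla t$. The level set $N$ is compact, by properness of $h$ on the two-ended manifold, exactly as in the Riemannian case.

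\emph{Step 3: rigidity under bounded curvature.} The warped metric is exactly the horospherical form of the complex hyperbolic metric of holomorphic curvature $-4$. We compute the curvature of $g_X$ via O'Neill/Gauss-type formulas. Bounded curvature as $t\to-\infty$ forces the structure on $N$ to be flat in a suitable sense: $N$ becomes a quotient of the Heisenberg group with its standard contact structure. The curvature of $X$ is then that of $\mathbb{CH}^n$, and completeness shows that $X$ is covered by it.

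\emph{Main obstacle.} The main obstacle is Step 1. Li--Wang's real argument only gives $\mathrm{Ric}(\nabla u,\nabla u)$ control through the full Hessian, which yields the weaker constant $(2n-1)^2/4$. To reach $n^2$ one must exploit that $h$ is pluriharmonic-like, i.e.\ the improved Kato inequality for the complex Hessian. Extracting the equality cases carefully through the cutoff limits is the delicate part, and I would first try to adapt Munteanu's integral estimate verbatim on $\{a<h<b\}$.
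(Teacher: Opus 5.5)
Your outline matches the paper's sketch: Li--Tam's function $h$, an integral estimate for $h\,|(\log h)_{\alpha\bar\beta}|^2$ against cutoffs combined with the Poincar\'e inequality, equality forcing $|\nabla\log h|\equiv 2n$, and splitting along the level sets of $\varphi=-\frac{1}{2n}\log h$. The paper notes that the rigidity step uses $\phi^3$ rather than $\phi^2$. But the K\"ahler mechanism you put at the heart of Step 1 is wrong, and the model itself shows this.

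On $\mathbb{CH}^n$ with the displayed metric, $h=e^{2nt}$, and $\nabla^2 t$ has eigenvalues $0$ on $\nabla t$, $-2$ on $J\nabla t$ and $-1$ on the rest. This form is not $J$-invariant, so the $(2,0)$-part of $\nabla^2 t$ (hence of $\nabla^2\log h$ and of $\nabla^2 h$) does \emph{not} vanish. Your own Step 2 eigenvalues contradict your Step 1 bullet. Nor is $h$ pluriharmonic: the $J$-invariant part of $\nabla^2 h$ equals $2n(n-1)h$ on the complex line through $\nabla t$. And since $\nabla|\nabla\log h|=0$ while $\nabla^2\log h\neq 0$, no Kato-type inequality is an equality there. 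So an argument driven by pluriharmonicity and the improved Kato inequality cannot reach the equality case. (Also, Cheng's bound under $\mathrm{Ric}\ge-2(n+1)$ in real dimension $2n$ is $\frac{(2n-1)(n+1)}{2}=n^2+\frac{n-1}{2}$; the number $\frac{(2n-1)^2}{4}$ is smaller than $n^2$, not weaker.)

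Here is what the equality case should deliver instead.
\begin{itemize}
\item Harmonicity of $h$ gives $\mathrm{tr}\,\nabla^2\log h=-|\nabla\log h|^2$, which fixes the trace of $(\log h)_{\alpha\bar\beta}$. By Cauchy--Schwarz, $|(\log h)_{\alpha\bar\beta}|^2$ is then at least $\frac1n$ times the square of that trace.
\item The sharp chain should force equality here, i.e.\ $(\log h)_{\alpha\bar\beta}$ is pointwise a multiple of the metric, together with $|\nabla\log h|\equiv 2n$.
\item For $t=\frac{1}{2n}\log h$ this says $|\nabla t|=1$ and the $J$-invariant part of $\nabla^2 t$ is $-g$. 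So $\varphi=-t$ is a K\"ahler potential of constant gradient length, in line with Section 4.
\item Let $L=\mathrm{span}\{\nabla t,J\nabla t\}$. The condition $\nabla^2 t(\nabla t,\cdot)=0$ forces the value $-2$ on $J\nabla t$ and kills the mixed $L$--$L^\perp$ terms. It leaves a $(2,0)$-part of squared norm $2$ on $L$; on $L^\perp$ the $(2,0)$-part is still undetermined.
\item This is where $\mathrm{Ric}\ge-2(n+1)$ enters. Since $|\nabla t|$ and $\mathrm{tr}\,\nabla^2 t=-2n$ are constant, Bochner's formula gives $|\nabla^2 t|^2=-\mathrm{Ric}(\nabla t,\nabla t)\le 2(n+1)$. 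The previous step gives $|\nabla^2 t|^2\ge 2n+2$.
\item Hence equality holds: the $(2,0)$-part vanishes exactly on $L^\perp$, and $\mathrm{Ric}(\nabla t,\nabla t)=-2(n+1)$.
\end{itemize}
This is the Hessian you use in Step 2, which then goes through.

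Two further points. First, the end structure in Step 1 is not obtained the way you claim. By Bishop--Gromov, $\mathrm{Ric}\ge-2(n+1)$ only gives volume growth $e^{\sqrt{2(n+1)(2n-1)}\,r}$, strictly faster than $e^{2nr}$, so there is no sharp matching volume bound. This is precisely why Li--Wang needed bisectional curvature and Munteanu works with integral estimates instead. From $\lambda_0>0$ you only know that each end is either nonparabolic of infinite volume or parabolic of finite volume. You must therefore rule out separately that all ends are nonparabolic before Li--Tam's $h$ is available. ``Exactly two ends'' is an output of the splitting, not an input: $N$ must be compact because $\mathbb{R}\times N$ has one end when $N$ is connected and noncompact.

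Second, Step 3 is only heuristic. With the displayed metric the level sets collapse as $t\to+\infty$ (the finite-volume end), so that is the end where a curvature bound constrains $N$.
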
 

Note that the manifold must have exactly two ends in this case. See also \cite{Li_Wang09, Munteanu10} for  characterizations of (1) ball quotients under bisectional curvature lower bounds and  (2) simply-connected K\"ahler manifolds with compact quotients, respectively. 

The proof again involves highly nontrivial estimates on the slightly different integral 
	\[
\int h\cdot|(\log\,h)_{\alpha\bar{\beta}}|^2\phi^3dV,
\]
on the manifold, where $h$ is the aforementioned harmonic function constructed in \cite{Li_Tam92}. Then the estimates imply that the function
\begin{equation}\label{eqn; phi}
\varphi:=-\frac{1}{2n}\log\,h
\end{equation}
has constant gradient length so that the manifold splits as the product of $\mathbb{R}$ and a level set of $\varphi$. 

Then it would be natural to ask whether the condition on the number of ends and the boundedness of the sectional curvature in the theorem  are redundant. For the end condition, the example in Theorem 1.3 of \cite{Li10} would provide a counterexample. It says that  any bounded strictly pseudoconvex domain of the form
\[
D(A):=\bigg\{z\in \mathbb{C}^n: |z|^2+\textup{Re}\,\bigg(\sum_{j=1}^nA_jz_j^2\bigg)-1<0\bigg\},~A=(A_1,\dots,A_n)\neq 0,
\]
where $0\leq A_1\leq A_2\leq\dots A_n<1,$ equipped with the complete K\"ahler-Einstein metric on $D(A)$ of Ricci curvature $-2(n+1)$  has the same bottom of the spectrum $\lambda_0(D(A))=n^2$ as the unit ball. Furthermore, the domain $D(A)$ is not biholomorphic to the unit ball so that the automorphism group of $D(A)$ is compact by the Wong-Rosay theorem.  On the other hand,  it is not known whether one can answer the following
\begin{question*}
Can the boundedness condition on the curvature be removed in Theorem \ref{thm;Mun} to obtain the same conclusion?
\end{question*}

Another interesting feature regarding Theorem \ref{thm;Mun} is that it provides a characterization of the complete K\"ahler-Einstein metric $\omega_{\textup{KE}}$ on the given K\"ahler manifold $X$ in terms of the functional $\lambda_0(X,\cdot)$. As the given metric $\omega$ turns out to be K\"ahler-Einstein in the proof when $X$ has bounded curvature, the theorem says that $\omega_{\textup{KE}}$ is the unique maximizer of $\lambda_0(X,\cdot)$ on the set
\[
\mathcal{K}(X):=\{\omega: \omega~\textit{is a complete K\"ahler metric on}~X~\textit{with}~\textup{Ric}(\omega)\geq -\omega\}.
\] 
If the manifold $X$ is compact, then  K\"ahler-Einstein metrics (if any exist) on $X$  can be characterized as maximizers of a certain functional; see, for example, \cite{Berman_Boucksom_Guedj_Zeriahi2013}. But a direct analogue for complete K\"ahler-Einstein metrics on  noncompact K\"ahler manifolds appears not to be available as the global integration involved in such functionals may diverge. In this context, we propose the following
\begin{question*}
For any noncompact K\"ahler manifold equipped with the K\"ahler-Einstein metric $\omega_{\textup{KE}}$ of Ricci curvature $-1$, is $\omega_{\textup{KE}}$ the unique maximizer of the functional $\lambda_0(X,\cdot)$ on $\mathcal{K}(X)$?
\end{question*}
  
Finally, we remark that it would also be interesting to compare the theorem to Yau's characterization \cite{Yau77} of compact quotients of the unit ball with ample canonical line bundle.

\bibliographystyle{alpha} 
\bibliography{referenceschoiyj.bib}

\end{document}